\documentclass[11pt]{amsart}

\usepackage[T1]{fontenc}
\usepackage[utf8]{inputenc}
\usepackage[margin=1.15in]{geometry}
\usepackage{amsmath,amssymb,amsthm,mathtools}
\usepackage{enumitem}

\usepackage[colorlinks=true,linkcolor=blue,citecolor=blue,urlcolor=blue]{hyperref}

\newtheorem{theorem}{Theorem}[section]

\newtheorem{proposition}[theorem]{Proposition}
\newtheorem{lemma}[theorem]{Lemma}
\newtheorem{corollary}[theorem]{Corollary}
\theoremstyle{remark}
\newtheorem{remark}[theorem]{Remark}

\numberwithin{equation}{section}

\newcommand{\F}{\mathbb F}
\newcommand{\Sym}{\operatorname{Sym}}
\newcommand{\GL}{\operatorname{GL}}
\newcommand{\diag}{\operatorname{diag}}

\newcommand{\rank}{\operatorname{rank}}

\DeclareMathOperator{\probb}{\mathbb P}
\DeclareMathOperator{\Ir}{Ir}

\title[Normal sign matrices]{Sharp exponential asymptotics for normal sign matrices}

\author{Caden Young}
\address{University of Tulsa, Tulsa, Oklahoma, USA}

\date{June 2026}

\subjclass[2020]{Primary 15B52; Secondary 15A27, 15B33, 05A16, 60B20}
\keywords{random sign matrix, normal matrix, Rademacher matrix, finite field, commuting matrices, symmetric matrices}
\begin{document}

\begin{abstract}
Let $M_n$ be an $n\times n$ random matrix whose entries are independent Rademacher random variables, and put $N=\binom n2$.  We prove
\[
        \probb(M_nM_n^T=M_n^TM_n)=2^{-N+O(n)}.
\]
This gives the sharp exponential order for the probability that a random sign matrix is normal.  The lower bound is supplied by symmetric sign matrices.  We also record the immediate consequence that random $0$-$1$ matrices have the same sharp exponential normality probability.  The proof of the matching upper bound is combinatorial: after conditioning on the symmetric/skew-symmetric type pattern of the off-diagonal entries, a mod-$4$ reduction gives a system of linear equations over $\F_2$; a rank-duality argument converts the sum over type patterns into a count of commuting symmetric pairs over $\F_2$; and this count is bounded by summing over rational canonical types, using balanced symmetric bilinear forms and the standard finite-field centralizer formula.
\end{abstract}
\maketitle

\section{Introduction}

An $n\times n$ real matrix $M$ is normal if
\[
        MM^T=M^TM.
\]
Let $M_n$ be chosen uniformly from $\{\pm1\}^{n\times n}$, equivalently with independent Rademacher entries.  We study
\[
        p_n:=\probb(M_nM_n^T=M_n^TM_n).
\]
Since every symmetric sign matrix is normal, and since a symmetric sign matrix is determined by its $\binom n2$ upper-triangular off-diagonal entries and its $n$ diagonal entries, one immediately has
\[
        p_n\geq \frac{2^{\binom n2+n}}{2^{n^2}}=2^{-\binom n2}.
\]
Deneanu and Vu proved the first general exponential bounds for this probability, showing that
\[
        2^{-(1/2+o(1))n^2}\leq p_n\leq 2^{-(0.302+o(1))n^2},
\]
and conjectured that the lower-bound exponent is sharp \cite{DeneanuVu2019}.  We prove this conjecture in the following stronger form.

\begin{theorem}\label{thm:main}
Let $M_n$ be uniformly random in $\{\pm1\}^{n\times n}$, and put $N=\binom n2$.  Then
\[
        2^{-N}\leq \probb(M_nM_n^T=M_n^TM_n)\leq 2^{-N+O(n)}.
\]
Equivalently,
\[
        \probb(M_nM_n^T=M_n^TM_n)=2^{-\binom n2+O(n)}=2^{-(1/2+o(1))n^2}.
\]
\end{theorem}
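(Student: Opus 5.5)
The lower bound is the symmetric-matrix count already recorded in the introduction, so the plan concerns the upper bound $p_n\le 2^{-N+O(n)}$. It follows the route announced in the abstract.

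\emph{Step 1: type patterns.} For each off-diagonal pair $i<j$, record whether $M_{ij}=M_{ji}$ (symmetric type) or $M_{ij}=-M_{ji}$ (skew type). This gives a pattern $T\subseteq\{(i,j):i<j\}$ of skew pairs. Given $T$, write $M=S+K$, where $S$ is supported on the diagonal and the symmetric pairs, and $K$ is skew-symmetric and supported on $T$. Normality then reads $MM^T-M^TM=2(KS-SK)=0$, that is, $[S,K]=0$.

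Condition on $T$; each pattern has probability $2^{-N}$. The remaining randomness consists of one independent sign for each off-diagonal pair (the value $M_{ij}$, $i<j$) and the $n$ diagonal signs. Write each entry as $\pm1=1-2x$ with $x\in\{0,1\}$ and reduce $[S,K]=0$ modulo $4$. Because every nonzero entry is odd, the constant parts are determined by $T$. After dividing by $2$, the condition modulo $4$ becomes an affine system of linear equations over $\F_2$ in the bits $x$, and its coefficient matrix depends only on $T$. Hence
\[
p_n\le 2^{-N}\sum_{T}\probb(\text{mod-}4\text{ system for }T\text{ holds})\le 2^{-N}\sum_T 2^{-r(T)},
\]
where $r(T)$ is the rank of that linear system (or the probability is $0$ if the system is inconsistent). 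It therefore suffices to show $\sum_T 2^{-r(T)}\le 2^{O(n)}$, keeping in mind that there are $2^N$ patterns.

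\emph{Step 2: rank duality.} I expect the linear map to be of the form $X\mapsto [\bar S_T, X]+[\,\cdot\,,\bar K_T]$ restricted to the appropriate supports, where $\bar S_T$ and $\bar K_T$ are the $0/1$ indicator matrices mod $2$. Over $\F_2$, skew and symmetric coincide, so both are symmetric matrices over $\F_2$. Write $2^{-r(T)}=2^{-(\dim)}\cdot|\ker|$, where the kernel is the solution space of the homogeneous system. Summing over $T$ then counts pairs $(T,\text{kernel element})$. Reorganizing this sum, with each $T$ contributing its kernel and the $2^N$ patterns offsetting the dimension factor, should turn $\sum_T 2^{-r(T)}$ into $2^{O(n)}\cdot 2^{-\binom{n+1}{2}}\cdot\#\{(A,B)\in\Sym_n(\F_2)^2: AB=BA\}$. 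The precise bookkeeping, namely which of the diagonal and support constraints survive, is the part I would work out carefully. The loss from the diagonal entries and from the inhomogeneous part is only $2^{O(n)}$.

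\emph{Step 3: counting commuting symmetric pairs (the main obstacle).} I need $\#\{(A,B)\in\Sym_n(\F_2)^2: AB=BA\}\le 2^{\binom{n+1}2+O(n)}$, that is, on average a symmetric $A$ commutes with only $2^{O(n)}$ symmetric $B$. To bound it, sum over rational canonical (similarity) types of $A$. For a fixed type, the number of $A$ of that type is at most $|\GL_n(\F_2)|/|C(A)|$, where $C(A)$ is the centralizer, given by the standard finite-field centralizer formula. The symmetric $B$ commuting with $A$ form a subspace of the centralizer algebra, consisting of the elements self-adjoint for the form. I would bound its dimension by roughly half of $\dim C(A)$ plus $O(n)$, using balanced symmetric bilinear forms so that the transpose acts on $C(A)$ as an involution with eigenspaces of nearly equal size. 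On the other side, the number of symmetric matrices in a similarity class should be about $2^{\binom{n+1}2-\dim C(A)/2+O(n)}$. The two factors of $2^{\pm\dim C(A)/2}$ then cancel. The number of types is $2^{O(n)}$, or at most subexponential in $n^2$; I need to check this carefully, together with small-field effects over $\F_2$ such as the unipotent part and the factor $\prod(1-2^{-i})$. Making this cancellation uniform over all types, especially for nilpotent or unipotent parts with many Jordan blocks, is where I expect the real difficulty to lie.
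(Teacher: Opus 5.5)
Your outline follows the paper's architecture: type patterns, mod-$4$ linearization, a rank-duality step that produces commuting symmetric pairs over $\F_2$, and a sum over rational canonical types. Step~1 is correct and gives $p_n\le 2^{-N}\sum_T2^{-r(T)}$, exactly as in the paper. The gap is that both steps carrying the content are left as expectations, and the mechanism you sketch for Step~2 does not lead where you want.

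\textbf{Step~2.} You normalize by the number of variables, $2^{-r(T)}=2^{-(N+n)}|\ker R_T|$, where $R_T$ is the $N\times(N+n)$ coefficient matrix. That is, you count solutions of the homogeneous system. Encode a solution $(x,z)$ as $W\in\Sym_n(\F_2)$, with off-diagonal entries $x$ and diagonal $z$. Your guessed linear map is essentially right, and the kernel works out to $\{W: WT+TW=JW_T+W_TJ\}$. Here $T$ is the $0/1$ pattern matrix, $J$ the all-ones matrix, and $W_T$ the restriction of $W$ to the skew pairs. The right-hand side is $\mathbf 1\rho^T+\rho\mathbf 1^T$ with $\rho$ the row sums of $W_T$. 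So $W$ commutes with $T$ only up to a correction of rank $\le 2$, and summing over pairs $(T,W)$ does not produce $C_n$.

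The missing idea is to dualize on the other side: $2^{-r(T)}=2^{-N}\#\{h\in\F_2^N:h^TR_T=0\}$, i.e.\ count linear \emph{dependencies} among the $N$ forms $\ell^T_{ij}$. Identify $h$ with a zero-diagonal symmetric $H$, and let $L_H$ be $H$ with its row sums placed on the diagonal. In $\sum h_{ij}\ell^T_{ij}$, the coefficient of $z_a$ is $(HT)_{aa}$ and that of $x_{ab}$ is $(L_HT+TL_H)_{ab}$. Dropping the condition $\diag(HT)=0$ and using that $(T,H)\mapsto(T,L_H)$ is injective gives $\sum_T2^{-r(T)}\le 2^{-N}C_n$.

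\textbf{Step~3.} The only count you write down is $|\GL_n(\F_2)|/|C(A)|\approx 2^{n^2-c}$, the size of the whole similarity class. Multiplied by $2^{s(\Lambda)}$, this is of order $2^{n^2-c/2+O(n)}$, far above $2^{N+O(n)}$ unless $c(\Lambda)=n^2-O(n)$. The bound you actually need, on the number $S(\Lambda)$ of \emph{symmetric} matrices of type $\Lambda$, is only asserted. The paper proves it as follows:
\begin{enumerate}
\item $S(\Lambda)=|G_{\rm sym}(T)|/|C_{\GL_n}(T)|$, where $G_{\rm sym}(T)=\{g:g^{-1}Tg\in\Sym_n(\F_2)\}$.
\item With $\beta$ the dot product, $g\mapsto\beta(g^{-1}\cdot,g^{-1}\cdot)$ maps $G_{\rm sym}(T)$ into nondegenerate symmetric $T$-balanced forms, of which there are at most $2^{s(\Lambda)}$. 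Its fibres have size at most $|O(\beta)|\le 2^N$.
\item $|C_{\GL_n}(T)|\ge 2^{c(\Lambda)-n}$, since the centralizer product formula has at most $n$ factors, each at least $1/2$.
\end{enumerate}
Hence $S(\Lambda)2^{s(\Lambda)}\le 2^{N+2s(\Lambda)-c(\Lambda)+n}=2^{N+2n}$ uniformly in $\Lambda$. This is precisely what removes your worry about unipotent types with many blocks.

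Your justification that the symmetric centralizer has dimension about $c/2$ also fails over $\F_2$. Transposition $\tau$ on $C(X)$ satisfies $(\tau+1)^2=0$, so there are no two eigenspaces to compare. A priori its fixed space can have any dimension between $c/2$ and $c$. The upper bound must be computed, as the paper does, from the cyclic decomposition. On a single cyclic summand every balanced form has the shape $\psi(pq)$ and so is automatically symmetric, while each pair of distinct summands is counted once. This gives exactly $s(\Lambda)=(n+c(\Lambda))/2$.

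Finally, you need $2^{O(n)}$ types, not merely $2^{o(n^2)}$, to get the $2^{-N+O(n)}$ form. This follows from convergence of the generating function at $u=1/4$.
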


The point of Theorem~\ref{thm:main} is that the elementary symmetric-matrix construction gives the correct exponential scale, up to only a linear error in the base-$2$ exponent.  A short consequence is that the same exponent holds for random $0$-$1$ matrices.  This is recorded in Section~\ref{sec:zero-one}; the relation is one-way under the affine change of variables $M=2U-J$, but it is enough to transfer the upper bound.

\subsection{Context}
Questions about rare algebraic properties of discrete random matrices are a central theme in combinatorial random matrix theory.  The most developed example is the singularity problem for Bernoulli matrices, where a sequence of increasingly sharp anti-concentration arguments culminated in the asymptotic formula of Tikhomirov \cite{Tikhomirov2020}, building on work such as Tao and Vu \cite{TaoVu2007}.  Other discrete spectral questions include the simple-spectrum problem for random symmetric matrices \cite{TaoVu2017}.  The present problem is different in flavor: normality imposes many quadratic equations, but the special $\{\pm1\}$ structure allows a mod-$4$ linearization after the right decomposition.
\subsection{Proof overview}
We write
\[
        A=\frac{M+M^T}{2},\qquad B=\frac{M-M^T}{2}.
\]
Then $A^T=A$, $B^T=-B$, and
\[
        MM^T-M^TM=2(BA-AB),
\]
so normality is equivalent to $[A,B]=0$.  Each off-diagonal pair $\{i,j\}$ is either symmetric, meaning $m_{ij}=m_{ji}$, or skew-symmetric, meaning $m_{ij}=-m_{ji}$.  This gives a type pattern
\[
        t=(t_{ij})_{1\leq i<j\leq n}\in \F_2^{\binom n2}.
\]
Once $t$ is fixed, the remaining signs are encoded by $N+n$ binary variables.

For each fixed type pattern, the equations $[A,B]_{ij}=0$ imply affine linear equations over $\F_2$ obtained by reducing the integer equation modulo $4$.  If $r(t)$ denotes the rank of the resulting homogeneous forms, the number of normal assignments with type pattern $t$ is at most $2^{N+n-r(t)}$.  Therefore
\[
        \#\{M\in\{\pm1\}^{n\times n}:MM^T=M^TM\}
        \leq 2^{N+n}\sum_t2^{-r(t)}.
\]

The main combinatorial step is a rank-duality transformation.  It converts the sum over type patterns into a count of pairs of symmetric matrices over $\F_2$ that commute:
\[
        \#\{M\in\{\pm1\}^{n\times n}:MM^T=M^TM\}
        \leq 2^n C_n,
\]
where
\[
        C_n:=\#\{(X,Y)\in\Sym_n(\F_2)^2:XY=YX\}.
\]
It remains to prove
\[
        C_n\leq 2^{\binom n2+O(n)}.
\]
This final estimate is obtained by grouping $X$ according to rational canonical type.  If a type is denoted by $\Lambda$, and if $c(\Lambda)$ is the dimension of the full matrix centralizer, then the space of symmetric matrices commuting with a symmetric representative has dimension
\[
        s(\Lambda)=\frac{n+c(\Lambda)}2.
\]
The same dimension appears as the dimension of the space of symmetric bilinear forms balanced with respect to a representative of type $\Lambda$.  We use this to bound the number of symmetric representatives of type $\Lambda$ by comparing the conjugating matrices that produce symmetric representatives with nondegenerate balanced forms and then dividing by the $\GL_n(\F_2)$-centralizer of the representative.  This is close in spirit to the rational-canonical-form summations of Feit--Fine and Fulman, and to the bilinear-form viewpoint used in work on symmetric nilpotent matrices \cite{FeitFine1960,Fulman2002,BrouwerGowSheekey2014}.  The remaining factor, the number of rational canonical types of size $n$, is only $2^{O(n)}$ \cite{Green1955,Fulman2002,Stong1988}.

\section{Decomposition and type patterns}

Throughout the paper let
\[
        N=\binom n2.
\]
Given a sign matrix $M=(m_{ij})$, define
\[
        A=\frac{M+M^T}{2},\qquad B=\frac{M-M^T}{2}.
\]
Then $A^T=A$, $B^T=-B$, and $M=A+B$, $M^T=A-B$.  Hence
\begin{align*}
MM^T-M^TM
&=(A+B)(A-B)-(A-B)(A+B)  \\
&=2(BA-AB).
\end{align*}
Therefore
\begin{equation}\label{eq:normal-commutator}
        MM^T=M^TM\qquad\Longleftrightarrow\qquad [A,B]=0.
\end{equation}

For each $i<j$, define $t_{ij}\in\F_2$ by
\[
        t_{ij}=0\Longleftrightarrow m_{ij}=m_{ji},
        \qquad
        t_{ij}=1\Longleftrightarrow m_{ij}=-m_{ji}.
\]
Also define signs $x_{ij}\in\F_2$, $i<j$, and $z_i\in\F_2$ by
\[
        m_{ij}=(-1)^{x_{ij}},\qquad m_{ii}=(-1)^{z_i}.
\]
Then
\[
        m_{ji}=(-1)^{x_{ij}+t_{ij}}.
\]
If $t_{ij}=0$, then
\[
        A_{ij}=A_{ji}=(-1)^{x_{ij}},\qquad B_{ij}=B_{ji}=0,
\]
while if $t_{ij}=1$, then
\[
        A_{ij}=A_{ji}=0,\qquad B_{ij}=(-1)^{x_{ij}},\qquad B_{ji}=-(-1)^{x_{ij}}.
\]
The diagonal entries are $A_{ii}=(-1)^{z_i}$ and $B_{ii}=0$.  Thus every sign matrix is encoded by
\[
        (t,x,z)\in\F_2^N\times\F_2^N\times\F_2^n.
\]
For distinct $a,b$, we write
\[
        t_{ab}:=t_{\min(a,b),\max(a,b)},\qquad
        x_{ab}:=x_{\min(a,b),\max(a,b)}.
\]

\section{The mod-$4$ linear filter}

Fix a type pattern $t$.  We derive necessary linear conditions over $\F_2$ for the remaining variables $(x,z)$.

For $i<j$, expand
\[
        [A,B]_{ij}=\sum_{k=1}^n(A_{ik}B_{kj}-B_{ik}A_{kj}).
\]
Separating $k=i,j$ gives
\begin{equation}\label{eq:comm-expanded}
        [A,B]_{ij}=\sum_{k\ne i,j}(A_{ik}B_{kj}-B_{ik}A_{kj})+(A_{ii}-A_{jj})B_{ij}.
\end{equation}
For $k\ne i,j$, the summand in \eqref{eq:comm-expanded} vanishes unless
\[
        t_{ik}\ne t_{jk}.
\]
Indeed, if both pairs are symmetric, the relevant $B$-entries vanish; if both pairs are skew-symmetric, the relevant $A$-entries vanish.

Let
\[
        K_{ij}:=\{k\ne i,j:t_{ik}\ne t_{jk}\}=\{k_1,\ldots,k_q\}.
\]
For every $k\in K_{ij}$, the corresponding summand has the form
\[
        \epsilon_{ijk}(-1)^{x_{ik}+x_{jk}},
\]
where $\epsilon_{ijk}\in\{\pm1\}$ depends only on $t$ and on the order of the indices.  Write $\epsilon_{ijk_\alpha}=(-1)^{a_\alpha}$ with $a_\alpha\in\F_2$, and set
\[
        y_\alpha=x_{ik_\alpha}+x_{jk_\alpha}\in\F_2.
\]
The diagonal contribution is
\[
        (A_{ii}-A_{jj})B_{ij}=t_{ij}\bigl((-1)^{z_i}-(-1)^{z_j}\bigr)(-1)^{x_{ij}}.
\]
Thus $[A,B]_{ij}=0$ implies
\begin{equation}\label{eq:integer-normality}
        0=\sum_{\alpha=1}^q(-1)^{y_\alpha+a_\alpha}
        +t_{ij}\bigl((-1)^{z_i}-(-1)^{z_j}\bigr)(-1)^{x_{ij}}.
\end{equation}
Reducing \eqref{eq:integer-normality} modulo $4$ and using
\[
        (-1)^u\equiv 1-2u\pmod 4\qquad (u\in\F_2),
\]
we obtain
\[
        q-2\sum_{\alpha=1}^q(y_\alpha+a_\alpha)+2t_{ij}(z_i+z_j)\equiv0\pmod4.
\]
If $q$ is odd, this congruence has no solutions.  If $q$ is even, division by $2$ modulo $2$ gives an affine linear equation in $(x,z)$.  Its homogeneous part is
\begin{equation}\label{eq:linear-form}
        \ell^t_{ij}(x,z)
        :=\sum_{\substack{k\ne i,j\\ t_{ik}\ne t_{jk}}}(x_{ik}+x_{jk})+t_{ij}(z_i+z_j).
\end{equation}
For fixed $t$, define
\[
        r(t):=\rank_{\F_2}\{\ell^t_{ij}:1\leq i<j\leq n\}.
\]

\begin{lemma}\label{lem:linear-filter}
For fixed $t$, the number of normal sign matrices with type pattern $t$ is at most
\[
        2^{N+n-r(t)}.
\]
Consequently,
\begin{equation}\label{eq:sum-rank-bound}
        \#\{M\in\{\pm1\}^{n\times n}:MM^T=M^TM\}
        \leq 2^{N+n}\sum_t2^{-r(t)}.
\end{equation}
\end{lemma}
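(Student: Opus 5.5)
The plan is to fix $t$ and use the bijection of Section~2 between sign matrices with type pattern $t$ and pairs $(x,z)\in\F_2^N\times\F_2^n$. By \eqref{eq:normal-commutator}, a matrix with pattern $t$ is normal exactly when $[A,B]_{ij}=0$ for all $i,j$. Since $[A,B]$ is symmetric (both $A$ and $B$ have definite parity under transposition, so $([A,B])^T=[B^T,A^T]=[-B,A]=[A,B]$), we may restrict attention to the entries with $i<j$. For each such entry, the mod-$4$ reduction carried out above shows that $[A,B]_{ij}=0$ forces two things. First, $q=|K_{ij}|$ must be even. Second, $(x,z)$ must satisfy an affine equation $\ell^t_{ij}(x,z)=c_{ij}(t)$, where the constant $c_{ij}(t)\in\F_2$ depends only on $t$ (through $q/2$ and the $a_\alpha$).

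I would verify the reduction carefully at one point: the homogeneous part is exactly \eqref{eq:linear-form}. This comes from two identities. Since $y_\alpha+a_\alpha$ is reduced mod $2$ and multiplied by $2$, the integer value $2(y_\alpha+a_\alpha \bmod 2)$ agrees mod $4$ with $2y_\alpha+2a_\alpha$. For the diagonal term, $(-1)^{z_i}-(-1)^{z_j}\equiv 2(z_j-z_i)\equiv 2(z_i+z_j)\pmod 4$. Multiplying this by the odd factor $(-1)^{x_{ij}}$ does not change its class mod $4$, because $2w\cdot(\pm1)\equiv 2w \pmod 4$. So the $x_{ij}$-dependence disappears, which is why \eqref{eq:linear-form} has no $x_{ij}$ term. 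This is the only slightly delicate step, and it is the one I would check first. After it, $\ell^t_{ij}$ is linear in the variables $x_{ik},x_{jk},z_i,z_j$ as claimed.

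Next, the set of normal matrices with pattern $t$ injects into the solution set of the affine system $\{\ell^t_{ij}(x,z)=c_{ij}(t)\}_{i<j}$ in $\F_2^{N+n}$. If some $q$ is odd, this set is empty. Otherwise an affine system whose homogeneous part has rank $r(t)$ has either no solutions or exactly $2^{N+n-r(t)}$ of them, since its solution set is a coset of a kernel of codimension $r(t)$. This gives the first bound.

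Finally, summing over all $t\in\F_2^N$ and using that $(t,x,z)$ encodes every sign matrix exactly once yields \eqref{eq:sum-rank-bound}. There is no real obstacle beyond the mod-$4$ bookkeeping in the second paragraph.
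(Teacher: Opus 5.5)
Your proposal is correct and follows essentially the same route as the paper. You fix $t$, treat the mod-$4$ congruences as necessary conditions (vacuous when some $q=|K_{ij}|$ is odd), bound the affine solution set by $2^{N+n-r(t)}$, and sum over $t$. Your extra verification of the mod-$4$ bookkeeping is accurate, and the symmetry of $[A,B]$ is harmless but unnecessary, since an upper bound only needs the $i<j$ entries as necessary conditions.
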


\begin{proof}
For fixed $t$, the remaining variables $(x,z)$ lie in $\F_2^{N+n}$.  The congruences obtained above are necessary conditions for normality.  If any of the associated integers $q=|K_{ij}|$ is odd, then there are no normal assignments satisfying that equation, and the claimed upper bound is trivial.  Otherwise, the normal assignments are contained in an affine linear system whose homogeneous row space is spanned by the forms $\ell^t_{ij}$.  The dimension of this affine system is at most $N+n-r(t)$, so it contains at most $2^{N+n-r(t)}$ points.  Summing over $t\in\F_2^N$ gives \eqref{eq:sum-rank-bound}.
\end{proof}

\section{Rank duality and commuting symmetric pairs}

Let $R_t$ be the $N\times(N+n)$ matrix over $\F_2$ whose rows are the coefficient vectors of the forms $\ell^t_{ij}$.  Then $r(t)=\rank R_t$.  By rank-nullity for the left nullspace,
\[
        2^{-r(t)}=2^{-N}\#\{h\in\F_2^N:h^TR_t=0\}.
\]
Therefore \eqref{eq:sum-rank-bound} gives
\begin{equation}\label{eq:rank-duality-first}
        \#\{M\in\{\pm1\}^{n\times n}:MM^T=M^TM\}
        \leq 2^n\#\{(t,h):h^TR_t=0\}.
\end{equation}

We now interpret the condition $h^TR_t=0$ as a matrix-commutation condition.  Identify $t$ and $h$ with symmetric zero-diagonal matrices $T,H\in\Sym_n(\F_2)$ by
\[
        T_{ab}=T_{ba}=t_{ab},\qquad H_{ab}=H_{ba}=h_{ab},\qquad T_{aa}=H_{aa}=0.
\]
Define $L_H\in\Sym_n(\F_2)$ by
\[
        (L_H)_{aa}=\sum_{c\ne a}h_{ac},\qquad (L_H)_{ab}=h_{ab}\quad(a\ne b).
\]

\begin{lemma}\label{lem:rank-duality}
For fixed $t$ and $h$, the condition $h^TR_t=0$ is equivalent to
\[
        \diag(HT)=0
        \qquad\text{and}\qquad
        L_HT=TL_H.
\]
\end{lemma}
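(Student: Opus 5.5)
The plan is to compute the vector $h^TR_t=\sum_{i<j}h_{ij}\ell^t_{ij}$ coordinate by coordinate and identify each coordinate as an entry of a matrix built from $T$ and $H$. The coordinates are indexed by the variables $x_{ab}$ ($a<b$) and $z_a$. Throughout I use two $\F_2$ facts: the indicator $[u\ne v]$ equals $u+v$, and $T_{cc}=H_{cc}=0$.

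\emph{Coefficient of $z_a$.} In \eqref{eq:linear-form}, $z_a$ appears in $\ell^t_{ij}$ exactly when $a\in\{i,j\}$, with coefficient $t_{ij}$. Summing over such pairs gives $\sum_{c\ne a}h_{ac}t_{ac}=(HT)_{aa}$. So the $z$-coordinates of $h^TR_t$ vanish if and only if $\diag(HT)=0$.

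\emph{Coefficient of $x_{ab}$.} The variable $x_{ab}$ occurs in $\ell^t_{ij}$ in two ways:
\begin{itemize}[label={}]
\item $\{i,j\}=\{a,c\}$ with summation index $k=b$ and $c\ne a,b$, contributing when $t_{ab}\ne t_{cb}$;
\item $\{i,j\}=\{b,c\}$ with $k=a$, contributing when $t_{ab}\ne t_{ac}$.
\end{itemize}
Hence the coefficient is
\[
\sum_{c\ne a,b}h_{ac}(t_{ab}+t_{cb})+\sum_{c\ne a,b}h_{bc}(t_{ab}+t_{ac}).
\]
The terms without $t_{ab}$ give $\sum_c h_{ac}t_{cb}+\sum_c t_{ac}h_{cb}=(HT)_{ab}+(TH)_{ab}$. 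The restriction $c\ne a,b$ is harmless here because the diagonals of $H$ and $T$ vanish. The terms with $t_{ab}$ give $t_{ab}\bigl(\sum_{c\ne a,b}h_{ac}+\sum_{c\ne a,b}h_{bc}\bigr)=t_{ab}\bigl((L_H)_{aa}+(L_H)_{bb}\bigr)$, since the two omitted $h_{ab}$ terms cancel mod $2$.

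Now write $L_H=H+D$ with $D=\diag((L_H)_{aa})$. Then, for $a\ne b$,
\[
(L_HT+TL_H)_{ab}=(HT+TH)_{ab}+(L_H)_{aa}t_{ab}+t_{ab}(L_H)_{bb}.
\]
This is exactly the coefficient above. So the $x$-coordinates of $h^TR_t$ are precisely the off-diagonal entries of the commutator $[L_H,T]$ over $\F_2$.

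Finally, $[L_H,T]$ is automatically zero on the diagonal, since $(XY)_{aa}=(YX)_{aa}$ whenever $X,Y$ are symmetric. Hence vanishing of the off-diagonal entries is equivalent to $L_HT=TL_H$, which together with the $z$-coordinates proves the lemma.

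The only real obstacle is bookkeeping in the $x_{ab}$ step: tracking both ways $x_{ab}$ enters the forms, noting that $i<j$ ordering is irrelevant because everything is symmetric, and recognising that the degree-like diagonal correction $L_H-H$ is exactly what absorbs the $t_{ab}$-terms. I would check this step carefully on small $n$ (say $n=3$) to confirm the index conventions.
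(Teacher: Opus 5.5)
Your proof is correct and is essentially the paper's argument: you compute the $z_a$- and $x_{ab}$-coefficients of $\sum_{i<j}h_{ij}\ell^t_{ij}$ and identify them with $(HT)_{aa}$ and with the $(a,b)$ entry of $L_HT+TL_H$, using that the two $h_{ab}$ terms in $(L_H)_{aa}+(L_H)_{bb}$ cancel mod $2$. You also state explicitly that the diagonal of $L_HT+TL_H$ vanishes automatically, which fills in a step the paper leaves implicit; for completeness, the $(b,a)$ entries also equal the $(a,b)$ entries because $(L_HT)^T=TL_H$.
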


\begin{proof}
The condition $h^TR_t=0$ says that the linear form
\[
        \sum_{i<j}h_{ij}\ell^t_{ij}
\]
is zero.  Equivalently, the coefficient of every variable $z_a$ and $x_{ab}$ is zero.

First consider $z_a$.  The variable $z_a$ appears in $\ell^t_{ab}$ with coefficient $t_{ab}$.  Hence its total coefficient is
\[
        \sum_{b\ne a}h_{ab}t_{ab}=(HT)_{aa}.
\]
Thus all $z$-coefficients vanish exactly when $\diag(HT)=0$.

Now fix $a<b$.  The variable $x_{ab}$ can appear only in rows indexed by pairs $\{a,c\}$ or $\{b,c\}$, where $c\ne a,b$.  From the row $\{a,c\}$ it appears with coefficient $t_{ab}+t_{bc}$, and from the row $\{b,c\}$ it appears with coefficient $t_{ac}+t_{ab}$.  Therefore the coefficient of $x_{ab}$ in the full row combination is
\begin{equation}\label{eq:x-coeff}
        \sum_{c\ne a,b}h_{ac}(t_{ab}+t_{bc})
        +\sum_{c\ne a,b}h_{bc}(t_{ac}+t_{ab}).
\end{equation}
Let $d_a=\sum_{c\ne a}h_{ac}$.  The $(a,b)$ entry of $L_HT+TL_H$ is
\[
        \sum_{c\ne a,b}h_{ac}t_{bc}+\sum_{c\ne a,b}t_{ac}h_{bc}+t_{ab}(d_a+d_b),
\]
which is exactly \eqref{eq:x-coeff}.  Hence all $x$-coefficients vanish exactly when $L_HT+TL_H=0$.  Since the field has characteristic $2$, this is equivalent to $L_HT=TL_H$.
\end{proof}

\begin{corollary}\label{cor:commuting-pairs-reduction}
Let
\[
        C_n:=\#\{(X,Y)\in\Sym_n(\F_2)^2:XY=YX\}.
\]
Then
\[
        \#\{M\in\{\pm1\}^{n\times n}:MM^T=M^TM\}\leq 2^n C_n.
\]
\end{corollary}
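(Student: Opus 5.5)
Starting from \eqref{eq:rank-duality-first}, it suffices to bound $\#\{(t,h):h^TR_t=0\}$ by $C_n$. I will do this with an injective map into commuting pairs of symmetric matrices over $\F_2$. By Lemma~\ref{lem:rank-duality}, every pair $(t,h)$ with $h^TR_t=0$ satisfies $L_HT=TL_H$; the extra condition $\diag(HT)=0$ can simply be dropped, since we only need an upper bound. So I will use the map
\[
        (t,h)\longmapsto (X,Y):=(T,L_H)\in\Sym_n(\F_2)^2 .
\]
Both $T$ and $L_H$ are symmetric by construction, so the image consists of commuting symmetric pairs.

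The remaining point is injectivity. The first coordinate $T$ determines $t$, because $t_{ab}=T_{ab}$ for $a<b$. The second coordinate $L_H$ determines $h$, because its off-diagonal entries are exactly $h_{ab}$; the diagonal of $L_H$ is a function of $h$ and carries no extra information. Hence $(t,h)\mapsto(T,L_H)$ is injective, and
\[
        \#\{(t,h):h^TR_t=0\}\le \#\{(X,Y)\in\Sym_n(\F_2)^2:XY=YX\}=C_n.
\]
Substituting this into \eqref{eq:rank-duality-first} gives the claimed bound $2^nC_n$.

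I see no real obstacle here; all the work was already done in Lemma~\ref{lem:rank-duality}. The only care needed is to note two things: the zero-diagonal matrix $T$ and the Laplacian-type matrix $L_H$ both lie in $\Sym_n(\F_2)$, and discarding the condition $\diag(HT)=0$ only enlarges the count.
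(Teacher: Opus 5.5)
Your proposal is correct and follows the paper's proof: drop the condition $\diag(HT)=0$, then use the map $(t,h)\mapsto(T,L_H)$ into commuting symmetric pairs. This map is injective because $h$ is read off from the off-diagonal entries of $L_H$, so $\#\{(t,h):h^TR_t=0\}\le C_n$, and substituting into \eqref{eq:rank-duality-first} gives the bound $2^nC_n$.
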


\begin{proof}
By Lemma~\ref{lem:rank-duality}, dropping the condition $\diag(HT)=0$ can only enlarge the set counted in \eqref{eq:rank-duality-first}.  The map
\[
        (T,H)\longmapsto (T,L_H)
\]
is injective, since $H$ is recovered from the off-diagonal entries of $L_H$.  Therefore
\[
        \#\{(t,h):h^TR_t=0\}
        \leq \#\{(X,Y)\in\Sym_n(\F_2)^2:XY=YX\}=C_n.
\]
Combining this with \eqref{eq:rank-duality-first} proves the claim.
\end{proof}

\section{Commuting symmetric pairs over $\F_2$}\label{sec:commuting-pairs}

It remains to prove the following estimate.

\begin{proposition}\label{prop:commuting-pairs}
One has
\[
        C_n=\#\{(X,Y)\in\Sym_n(\F_2)^2:XY=YX\}\leq 2^{N+O(n)}.
\]
\end{proposition}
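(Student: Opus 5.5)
We group $X$ by its rational canonical type, following the route sketched in the proof overview. Write
\[
        C_n=\sum_{X\in\Sym_n(\F_2)}\#\{Y\in\Sym_n(\F_2):XY=YX\}
        =\sum_{\Lambda}\#\{X\in\Sym_n(\F_2)\text{ of type }\Lambda\}\cdot 2^{s_X},
\]
where $s_X$ is the $\F_2$-dimension of the space of symmetric matrices commuting with $X$. Since the number of rational canonical types $\Lambda$ of size $n$ is $2^{O(n)}$ \cite{Green1955,Fulman2002,Stong1988}, it suffices to show that each type contributes at most $2^{N+O(n)}$.

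\textbf{Step 1: the dimension $s_X=(n+c(\Lambda))/2$.} For any square matrix $X$ over a field, $X$ is similar to $X^T$ via a \emph{symmetric} invertible matrix $S$ with $SX=X^TS$. This is the classical Taussky--Zassenhaus fact; we check it is valid over $\F_2$, e.g.\ via companion blocks and Hankel matrices. For symmetric $X$, the map $Y\mapsto Y^T$ is an involution on the centralizer $Z(X)$, of dimension $c(\Lambda)$. In characteristic $2$ one cannot simply split $Z(X)$ into $\pm1$ eigenspaces, so instead I would identify symmetric elements of $Z(X)$ with symmetric bilinear forms $\beta$ balanced for $X$, meaning $\beta(Xu,v)=\beta(u,Xv)$. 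On a cyclic primary block this space is a space of Hankel-type forms. I would then compute its dimension blockwise using the decomposition of $\F_2^n$ into $X$-cyclic pieces, and show it equals $(n+c)/2$. Concretely, the $\Hom$ between cyclic pieces of sizes $\mathrm{min}$-related behaves like a ``symmetric square'' count, giving $\sum_{i}\deg\cdot(\text{diagonal terms}) + \tfrac12(\text{off-diagonal})$.

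\textbf{Step 2: counting symmetric $X$ of type $\Lambda$.} Fix a representative $X_0$ of type $\Lambda$. Every symmetric $X=PX_0P^{-1}$ gives the nondegenerate balanced form $Q=P^TP$ for $X_0$, since $X$ symmetric implies $P^TPX_0=X_0^TP^TP$. Conversely $P$ is determined by $X$ up to right multiplication by the centralizer $Z_{\GL}(X_0)$. The map $P\mapsto P^TP$ has fibres of size at most $|O(Q)|\le|O_n(\F_2)|$. This bound is too lossy, so I would instead count pairs directly: the number of symmetric $X$ of type $\Lambda$ is at most
\[
\frac{\#\{P: P^TP \text{ balanced}\}}{|Z_{\GL}(X_0)|}\le\frac{|O_n(\F_2)|\cdot 2^{s(\Lambda)}}{|Z_{\GL}(X_0)|}.
\]
Next I would use $|O_n(\F_2)|\le 2^{N+O(n)}$ (since $\dim O_n=\binom n2$) and the standard centralizer formula, which gives $|Z_{\GL}(X_0)|\ge 2^{c(\Lambda)-O(n)}$. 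The latter is a product over blocks of factors $\prod(1-q^{-i})$, bounded below by $2^{-O(n)}$ in aggregate. That this gives $2^{-O(n)}$ rather than worse I would verify from Fulman's formula, since the number of distinct $\GL$ factors is at most $n$ and each is at least $\prod_i(1-2^{-i})\ge 0.28$.

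\textbf{Step 3: conclusion.} Combining the two steps, each type contributes at most
\[
2^{N+O(n)}\cdot 2^{s-c}\cdot 2^{s}=2^{N+O(n)}\cdot 2^{2s-c}=2^{N+n+O(n)},
\]
and summing over the $2^{O(n)}$ types gives $C_n\le 2^{N+O(n)}$. The main obstacle is Step 1 in characteristic $2$: establishing the exact formula $s=(n+c)/2$ (an upper bound $s\le (n+c)/2+O(n)$ suffices) and the existence of symmetric similarity to the transpose. I would first prove the bound for a single cyclic block $\F_2[x]/(f^k)$, where balanced forms are exactly $\beta(u,v)=\lambda(uv)$ for a linear functional $\lambda$ on the algebra; this gives dimension equal to $\deg f^k$. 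I would then handle the cross terms between two blocks, whose balanced forms correspond to $\Hom$ spaces counted once (not twice) in $s$ but twice in $c$.
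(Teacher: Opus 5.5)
Your plan follows the paper's proof step for step:
\begin{itemize}
\item the decomposition $C_n=\sum_\Lambda S(\Lambda)2^{s(\Lambda)}$;
\item the identification of the symmetric centralizer with symmetric balanced forms, and the blockwise computation $s(\Lambda)=(n+c(\Lambda))/2$ (cyclic blocks via $\beta(u,v)=\lambda(uv)$, cross terms counted once);
\item the bound $S(\Lambda)\le |O(\beta)|\,2^{s(\Lambda)}/|C_{\GL_n}(X_0)|$, where your $P\mapsto P^TP$ is the paper's $g\mapsto B_g$ with $P=g^{-1}$;
\item the lower bound $|C_{\GL_n}(X_0)|\ge 2^{c(\Lambda)-O(n)}$;
\item the final sum over $2^{O(n)}$ types.
\end{itemize}
Two ingredients you flag are not needed. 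The Taussky--Zassenhaus symmetric similarity plays no role, because the blockwise count of symmetric balanced forms applies to any operator of type $\Lambda$, and that is all Step~2 uses. The fibre bound you call ``too lossy'' is exactly what you, and the paper, end up using: one factor $|O(\beta)|$ per balanced form. It suffices.

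The one step you have not justified is $|O_n(\F_2)|\le 2^{N+O(n)}$ ``since $\dim O_n=\binom n2$''. A dimension count does not by itself control the number of $\F_2$-points up to a factor $2^{O(n)}$. A generic bound of the form $(q+1)^{\dim G}$ would give $3^N$ and destroy the exponent. Also, in characteristic $2$ the isometry group of $\beta(u,v)=u^Tv$ is not the classical orthogonal group of a quadratic form, so even the dimension claim needs its own argument.

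The paper's fix is elementary. An isometry is determined by an orthonormal frame $v_1,\dots,v_n$. Given $v_1,\dots,v_{j-1}$, which are independent because they are orthonormal, the conditions $\beta(v_j,v_i)=0$ are $j-1$ independent linear conditions. The norm condition $\beta(v_j,v_j)=\sum_k (v_j)_k=1$ is itself linear over $\F_2$, so it is either inconsistent or cuts out one further dimension. Hence $v_j$ has at most $2^{n-j}$ choices, and $|O(\beta)|\le 2^N$. With this substituted for the dimension heuristic, your argument is complete.
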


The proof in this section is written in rational-canonical-form language.  It follows the same general counting philosophy as Feit--Fine's enumeration of commuting matrix pairs over finite fields \cite{FeitFine1960}: fix the rational canonical type of the first matrix, count possible second matrices by a centralizer dimension, and then sum over types.  The extra point here is that the first matrix and the second matrix are both required to be symmetric.  We handle this by translating the symmetric centralizer condition into a statement about symmetric bilinear forms balanced with respect to the relevant $\F_2[t]$-module; compare the bilinear-form viewpoint in Brouwer--Gow--Sheekey \cite{BrouwerGowSheekey2014}.

\subsection{Rational canonical types}

Let $\Ir(\F_2[t])$ denote the set of monic Ireducible polynomials over $\F_2$.  A rational canonical type of size $n$ is a finite-support collection
\[
        \Lambda=(\lambda_\phi)_{\phi\in\Ir(\F_2[t])}
\]
of partitions satisfying
\[
        \sum_\phi d_\phi |\lambda_\phi|=n,
        \qquad d_\phi:=\deg \phi.
\]
The matrices in $M_n(\F_2)$ of type $\Lambda$ form one ordinary similarity class.  If $T$ has type $\Lambda$, then the dimension of its full matrix centralizer is
\begin{equation}\label{eq:centralizer-dimension-lambda}
        c(\Lambda)=\dim_{\F_2}\{Z\in M_n(\F_2):ZT=TZ\}
        =\sum_\phi d_\phi\sum_{j\geq1}(\lambda'_{\phi,j})^2,
\end{equation}
where $\lambda'_{\phi}$ denotes the conjugate partition.  This is the standard centralizer-dimension formula for rational canonical form; see, for instance, Green \cite{Green1955} or Fulman's finite-field survey \cite{Fulman2002}.

For a partition $\lambda$, write $m_i(\lambda)$ for the number of parts of $\lambda$ equal to $i$.  We shall use the standard formula
\begin{equation}\label{eq:gl-centralizer-formula}
        |C_{\GL_n}(T)|
        =2^{c(\Lambda)}
        \prod_\phi\prod_{i\geq1}\prod_{j=1}^{m_i(\lambda_\phi)}
        \bigl(1-2^{-jd_\phi}\bigr),
\end{equation}
where
\[
        C_{\GL_n}(T)=\{g\in\GL_n(\F_2):gT=Tg\}.
\]
This form of the formula is also standard in the rational-canonical-form enumeration literature \cite{Fulman2002}.

\subsection{Symmetric balanced forms and the symmetric centralizer}

Let $V=\F_2^n$.  For a linear operator $T$ on $V$, call a bilinear form $B:V\times V\to\F_2$ \emph{$T$-balanced} if
\[
        B(Tu,v)=B(u,Tv)\qquad(u,v\in V).
\]

\begin{lemma}\label{lem:balanced-dimension-lambda}
Let $T$ have rational canonical type $\Lambda$.  The vector space of symmetric $T$-balanced bilinear forms on $V$ has dimension
\[
        s(\Lambda):=\frac{n+c(\Lambda)}2.
\]
Consequently, if $X=X^T$ has type $\Lambda$, then
\[
        \dim_{\F_2}\{Y\in\Sym_n(\F_2):YX=XY\}=s(\Lambda).
\]
\end{lemma}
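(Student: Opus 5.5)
The second assertion is a direct translation of the first, so the real work is the first, and I would prove it by a block computation over a cyclic decomposition of $V$.

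For the translation, fix $X=X^T$ and associate to $Y\in\Sym_n(\F_2)$ the symmetric form $B_Y(u,v)=u^TYv$. Then
\[
B_Y(Xu,v)=u^TX^TYv=u^TXYv,\qquad B_Y(u,Xv)=u^TYXv,
\]
so $B_Y$ is $X$-balanced exactly when $XY=YX$. Since $Y\mapsto B_Y$ is a linear bijection from $\Sym_n(\F_2)$ onto the symmetric bilinear forms on $V$, the symmetric centralizer of $X$ has dimension equal to the dimension of the space of symmetric $X$-balanced forms. Taking $T=X$ in the first assertion then gives $s(\Lambda)$.

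For the first assertion, choose a decomposition $V=V_1\oplus\cdots\oplus V_r$ into $T$-stable cyclic summands. For concreteness take the primary (elementary divisor) decomposition, so that $V_i\cong\F_2[t]/(f_i)$ with $f_i=\phi_i^{\mu_i}$, $\phi_i$ irreducible, and $m_i=\deg f_i$, giving $\sum_i m_i=n$. A bilinear form $B$ on $V$ is the same as a family of blocks $B_{ij}:V_i\times V_j\to\F_2$. Because each $V_i$ is $T$-stable, $B$ is $T$-balanced if and only if every block satisfies $B_{ij}(Tu,v)=B_{ij}(u,Tv)$. Moreover $B$ is symmetric if and only if $B_{ji}(v,u)=B_{ij}(u,v)$ for all $i,j$. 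Hence the space of symmetric balanced forms is isomorphic to
\[
\bigoplus_{i<j}\mathrm{Bal}(V_i,V_j)\;\oplus\;\bigoplus_i \mathrm{SymBal}(V_i).
\]

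The key step is to compute the balanced forms between two cyclic modules. Let $e_i$ generate $V_i$, and let $B$ be a balanced pairing $V_i\times V_j\to\F_2$. Then
\[
B(p(T)e_i,q(T)e_j)=B(e_i,p(T)q(T)e_j),
\]
so $B$ is determined by the linear functional $\ell(r)=B(e_i,r(T)e_j)$. This $\ell$ is defined on $\F_2[t]$ and must kill both $(f_j)$, since $f_j(T)e_j=0$, and $(f_i)$, since $B(e_i,f_i(T)v)=B(f_i(T)e_i,v)=0$. So $\ell$ factors through $\F_2[t]/(\gcd(f_i,f_j))$. Conversely, every functional on $\F_2[t]/(\gcd(f_i,f_j))$ defines a well-defined balanced pairing by $(p(T)e_i,q(T)e_j)\mapsto\ell(pq)$. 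Therefore
\[
\dim\mathrm{Bal}(V_i,V_j)=\deg\gcd(f_i,f_j)=\dim\operatorname{Hom}_{\F_2[t]}(V_i,V_j).
\]
Explicitly, this equals $d_\phi\min(\mu_i,\mu_j)$ when $\phi_i=\phi_j=\phi$ and $0$ otherwise. For $i=j$, commutativity of $\F_2[t]$ gives $B(u,v)=\ell(pq)=B(v,u)$, so every balanced form on a cyclic module is automatically symmetric, and $\dim\mathrm{SymBal}(V_i)=m_i$.

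Summing over blocks gives the formula. The full centralizer satisfies
\[
c(\Lambda)=\sum_{i,j}\dim\operatorname{Hom}_{\F_2[t]}(V_i,V_j)=n+2\sum_{i<j}\deg\gcd(f_i,f_j),
\]
which agrees with \eqref{eq:centralizer-dimension-lambda} by the standard identity $\sum_{a,b}\min(\mu_a,\mu_b)=\sum_j(\lambda'_j)^2$. Hence the dimension of the space of symmetric balanced forms is
\[
\sum_i m_i+\sum_{i<j}\deg\gcd(f_i,f_j)=n+\frac{c(\Lambda)-n}{2}=s(\Lambda).
\]

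The only delicate point is the bijection between balanced pairings and functionals on $\F_2[t]/(\gcd(f_i,f_j))$. In particular, well-definedness must use both annihilators, $f_i$ and $f_j$, and this is exactly what makes the off-diagonal dimension match $\operatorname{Hom}$. Everything else is bookkeeping.
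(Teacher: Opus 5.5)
Your proof is correct and follows essentially the paper's argument. You decompose $V$ into primary cyclic $\F_2[t]$-summands and identify balanced pairings between two cyclic summands with functionals on $\F_2[t]/(\gcd(f_i,f_j))$. Diagonal blocks are automatically symmetric, off-diagonal blocks are free with the transpose block forced by symmetry, and the count is compared with $c(\Lambda)=n+2\sum_{i<j}\deg\gcd(f_i,f_j)$; the symmetric-centralizer statement then follows from the same identification $Y\mapsto B_Y$. You are slightly more explicit than the paper about well-definedness, using both annihilators $f_i$ and $f_j$, but the route is the same.
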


\begin{proof}
Make $V$ into an $\F_2[t]$-module by letting $t$ act as $T$.  Decompose $V$ into its rational canonical summands:
\[
        V\simeq\bigoplus_\phi V_\phi,
        \qquad
        V_\phi\simeq\bigoplus_{a\in\lambda_\phi}\F_2[t]/(\phi^a).
\]
Balanced forms between distinct primary parts vanish.  More precisely, if
\[
        M=\F_2[t]/(\phi^a),\qquad M'=\F_2[t]/(\psi^b),
\]
then the space of balanced bilinear forms $M\times M'\to\F_2$ has dimension
\[
        \begin{cases}
        d_\phi\min(a,b),&\phi=\psi,\\
        0,&\phi\ne\psi.
        \end{cases}
\]
Indeed, such a form is determined by the functional $y\mapsto B(1,y)$ on $M'$, with the relation that this functional vanish on $\phi^aM'$.

Fix $\phi$ and write $\lambda_\phi=(\lambda_1,\ldots,\lambda_r)$ and $d=d_\phi$.  The contribution of the $\phi$-primary part to the full centralizer dimension is
\[
        c_\phi
        =d\sum_{i,j=1}^r\min(\lambda_i,\lambda_j)
        =d\sum_{j\geq1}(\lambda'_j)^2.
\]
The contribution of this primary part to the dimension of symmetric balanced forms is
\[
        s_\phi
        =d\sum_i\lambda_i+d\sum_{i<j}\min(\lambda_i,\lambda_j).
\]
The first term is the contribution from diagonal cyclic blocks: on a cyclic module, a balanced form has the shape $B(p,q)=\psi(pq)$, hence is symmetric because multiplication in $\F_2[t]/(\phi^a)$ is commutative.  For each off-diagonal pair of cyclic blocks, the block $M_i\times M_j$ may be chosen freely and the opposite block is then forced by symmetry.

Since
\[
        c_\phi=d\sum_i\lambda_i+2d\sum_{i<j}\min(\lambda_i,\lambda_j)
\]
and $n_\phi=d\sum_i\lambda_i$, we get $s_\phi=(n_\phi+c_\phi)/2$.  Summing over $\phi$ gives $s(\Lambda)=(n+c(\Lambda))/2$.

Now suppose $X=X^T$ has type $\Lambda$, and let $\beta(u,v)=u^Tv$ be the standard dot product.  The map
\[
        Y\longmapsto B_Y,
        \qquad
        B_Y(u,v)=\beta(Yu,v),
\]
identifies matrices commuting with $X$ with $X$-balanced bilinear forms.  Under this identification, $Y=Y^T$ is equivalent to $B_Y$ being symmetric.  Hence the symmetric centralizer dimension is $s(\Lambda)$.
\end{proof}

\subsection{Counting symmetric representatives of a fixed type}

For a rational canonical type $\Lambda$, set
\[
        S(\Lambda):=\#\{X\in\Sym_n(\F_2):X\text{ has type }\Lambda\}.
\]

\begin{lemma}\label{lem:symmetric-type-bound}
For every type $\Lambda$ of size $n$,
\[
        S(\Lambda)\leq 2^{N+s(\Lambda)-c(\Lambda)+O(n)}.
\]
More explicitly, the proof below gives
\[
        S(\Lambda)\leq 2^{N+s(\Lambda)-c(\Lambda)+n}.
\]
\end{lemma}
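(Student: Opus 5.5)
Fix a representative $T_0\in M_n(\F_2)$ of type $\Lambda$. Every matrix of type $\Lambda$ has the form $gT_0g^{-1}$ with $g\in\GL_n(\F_2)$, and $g$ is determined up to right multiplication by $C_{\GL_n}(T_0)$. We therefore count the conjugators
\[
\mathcal G:=\{g\in\GL_n(\F_2): gT_0g^{-1}\text{ is symmetric}\},
\]
which gives $S(\Lambda)=|\mathcal G|/|C_{\GL_n}(T_0)|$. The condition $(gT_0g^{-1})^T=gT_0g^{-1}$ rearranges to $g^Tg\,T_0=T_0^T\,g^Tg$. Put $Q:=g^Tg$. Then $Q$ is an invertible symmetric matrix with $QT_0=T_0^TQ$. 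Equivalently, the bilinear form $B_Q(u,v)=u^TQv$ is a nondegenerate symmetric $T_0$-balanced form, because $B_Q(T_0u,v)=u^TT_0^TQv=u^TQT_0v=B_Q(u,T_0v)$. Lemma~\ref{lem:balanced-dimension-lambda} shows that the space of such $Q$ has dimension $s(\Lambda)$, so there are at most $2^{s(\Lambda)}$ choices of $Q$.

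Next, for a fixed invertible symmetric $Q$ we bound the fiber $\#\{g\in\GL_n(\F_2):g^Tg=Q\}$. Any two solutions satisfy $g'=hg$ with $h$ in the orthogonal group $O_n(\F_2)=\{h:h^Th=I\}$, since $g'g^{-1}$ preserves the dot product. The fiber therefore has size $0$ or $|O_n(\F_2)|$. We need $|O_n(\F_2)|\le 2^{N+O(n)}$. Over $\F_2$ the dot product is a symmetric, possibly alternating-on-a-subspace form, and its isometry group fixes the vector $\mathbf 1$ (because $u\cdot u=\sum u_i=u\cdot\mathbf 1$). This makes it a subgroup of an affine or symplectic-type group on a space of dimension about $n$. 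The cleanest route is probably the classical bound $|O_n(\F_2)|\le|\mathrm{Sp}_{2\lceil n/2\rceil}(\F_2)|\cdot 2^{O(n)}=2^{\binom n2+O(n)}$. Alternatively, one can count orthonormal frames column by column: the $k$-th column must satisfy $k$ independent linear or affine conditions, leaving at most $2^{n-k+1}$ choices, so the product is $2^{\sum_{k}(n-k+1)}=2^{N+n}$. I will use this frame count, because it gives exactly the explicit constant $2^{N+n}$.

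Combining these steps,
\[
S(\Lambda)\le \frac{2^{s(\Lambda)}\cdot 2^{N+n}}{|C_{\GL_n}(T_0)|}.
\]
For the denominator we apply \eqref{eq:gl-centralizer-formula}. The product $\prod(1-2^{-jd})$ is at least a fixed positive constant raised to the number of distinct blocks. Cruder still, each factor is at least $1/2$, so taken naively the product could contribute $2^{-O(n)}$. To reach the explicit bound $2^{N+s-c+n}$, I would instead absorb this loss by tightening the frame count. Note that the first column condition $g_1\cdot g_1=Q_{11}$ is linear, so the count $2^{n-k+1}$ already has slack, roughly one bit per step compared with $2^{n-k}$. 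This slack covers $\prod_j(1-2^{-j})^{-1}\le 4$ per partition block, and the total number of such factors is at most $n$. If the bookkeeping is too tight, I will settle for the $O(n)$ version, which is all that Proposition~\ref{prop:commuting-pairs} needs.

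The main obstacle is the fiber bound on $|O_n(\F_2)|$ with the right exponent $N$. Because of characteristic $2$, the diagonal constraints are linear rather than quadratic. This is fine for an upper bound, but the frame count has to be checked carefully: the $k$-th column must be orthogonal to the previous $k-1$ columns, which are independent, and it must also satisfy the norm condition, which coincides with the linear condition $g_k\cdot\mathbf 1=1$. The latter may be dependent on the former, and this dependence is exactly why the count is $2^{n-k+1}$ rather than $2^{n-k}$.
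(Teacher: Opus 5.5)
Your route is the same as the paper's. You write $S(\Lambda)$ as the number of symmetrizing conjugators divided by $|C_{\GL_n}(T_0)|$. You send each conjugator to the nondegenerate symmetric $T_0$-balanced form $Q=g^Tg$, which has at most $2^{s(\Lambda)}$ choices by Lemma~\ref{lem:balanced-dimension-lambda}. You note that each fiber is empty or a coset of the orthogonal group, count orthonormal frames, and use the crude bound $|C_{\GL_n}(T_0)|\ge 2^{c(\Lambda)-n}$. As written, this rigorously gives $S(\Lambda)\le 2^{N+s(\Lambda)-c(\Lambda)+2n}$. That is the $O(n)$ form of the lemma, and it is all Proposition~\ref{prop:commuting-pairs} needs. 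Your phrase ``$k$ independent conditions, leaving at most $2^{n-k+1}$ choices'' is self-contradictory. What you actually justify is $k-1$ independent orthogonality conditions plus one possibly dependent norm condition.

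The gap is the explicit constant $+n$, which is also part of the statement. Your frame count only yields $|O_n(\F_2)|\le 2^{N+n}$, and the proposed tightening does not recover the missing $n$ bits. Linearity of the norm condition is not a source of slack; it is exactly why you got $2^{n-k+1}$. The per-block accounting you sketch, a factor up to $4$ per block with up to $n$ blocks, would cost up to $2n$ bits, more than the $n$ bits you hope to gain.

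The missing observation is one you nearly stated. Suppose the norm functional $u\mapsto u\cdot\mathbf 1$ depends on the previous ones, i.e.\ $\mathbf 1=\sum_{i\in I}v_i$ with $I\subseteq\{1,\dots,k-1\}$. Then orthogonality forces $v_k\cdot v_k=v_k\cdot\mathbf 1=\sum_{i\in I}v_k\cdot v_i=0\neq 1$, so there are no admissible $v_k$ at all. Hence each step has either $0$ or $2^{n-k}$ choices, and $|O_n(\F_2)|\le 2^N$. Combining this with $|C_{\GL_n}(T_0)|\ge 2^{c(\Lambda)-n}$ gives exactly $S(\Lambda)\le 2^{N+s(\Lambda)-c(\Lambda)+n}$, with no refinement of the centralizer estimate. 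This is how the paper obtains the explicit bound.
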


\begin{proof}
Fix a representative $T$ of type $\Lambda$.  If no symmetric matrix has type $\Lambda$, then $S(\Lambda)=0$ and there is nothing to prove.  Otherwise define
\[
        G_{\rm sym}(T):=\{g\in\GL_n(\F_2):g^{-1}Tg\in\Sym_n(\F_2)\}.
\]
Each symmetric representative of type $\Lambda$ is obtained from exactly $|C_{\GL_n}(T)|$ elements of $G_{\rm sym}(T)$, so
\begin{equation}\label{eq:S-lambda-quotient}
        S(\Lambda)=\frac{|G_{\rm sym}(T)|}{|C_{\GL_n}(T)|}.
\end{equation}

We first bound the numerator.  Let $\beta(u,v)=u^Tv$ be the standard dot product.  For $g\in G_{\rm sym}(T)$, define
\[
        B_g(u,v)=\beta(g^{-1}u,g^{-1}v).
\]
If $X=g^{-1}Tg$ is symmetric, then $B_g$ is a nondegenerate symmetric $T$-balanced bilinear form.  Indeed, using $g^{-1}T=Xg^{-1}$ and the self-adjointness of $X$ with respect to $\beta$,
\[
        B_g(Tu,v)=\beta(Xg^{-1}u,g^{-1}v)
        =\beta(g^{-1}u,Xg^{-1}v)=B_g(u,Tv).
\]
By Lemma~\ref{lem:balanced-dimension-lambda}, there are at most $2^{s(\Lambda)}$ possible forms $B_g$.

For a fixed bilinear form $B$, the set of $g$ satisfying $B_g=B$ has size at most the orthogonal group of $\beta$,
\[
        O(\beta)=\{U\in\GL_n(\F_2):\beta(Uu,Uv)=\beta(u,v)\text{ for all }u,v\}.
\]
Thus
\[
        |G_{\rm sym}(T)|\leq 2^{s(\Lambda)}|O(\beta)|.
\]
We use the simple bound
\begin{equation}\label{eq:orthogonal-bound}
        |O(\beta)|\leq 2^N.
\end{equation}
An element of $O(\beta)$ is determined by the images $v_1,\ldots,v_n$ of the standard basis.  These images must satisfy $\beta(v_i,v_j)=\delta_{ij}$.  When choosing $v_j$, the conditions $\beta(v_j,v_i)=0$ for $i<j$ are $j-1$ independent linear conditions.  The norm condition $\beta(v_j,v_j)=1$ is also linear over $\F_2$.  If its linear functional lies in the span of the previous ones, the equation is inconsistent with the required right-hand side; otherwise it cuts the solution space by one additional dimension.  Hence there are at most $2^{n-j}$ choices for $v_j$, and
\[
        |O(\beta)|\leq\prod_{j=1}^n2^{n-j}=2^N.
\]
Therefore
\begin{equation}\label{eq:Gsym-bound}
        |G_{\rm sym}(T)|\leq 2^{s(\Lambda)}2^N.
\end{equation}
We now bound the denominator in \eqref{eq:S-lambda-quotient}.  From \eqref{eq:gl-centralizer-formula}, every factor in the product is at least $1/2$.  The number of factors is
\[
        R(\Lambda)=\sum_\phi\sum_{i\geq1}m_i(\lambda_\phi),
\]
the total number of rational canonical blocks.  Since each block has dimension at least $1$, we have $R(\Lambda)\leq n$.  Hence
\begin{equation}\label{eq:centralizer-lower-bound}
        |C_{\GL_n}(T)|\geq 2^{c(\Lambda)-n}.
\end{equation}
Combining \eqref{eq:S-lambda-quotient}, \eqref{eq:Gsym-bound}, and \eqref{eq:centralizer-lower-bound}, we get
\[
        S(\Lambda)
        \leq \frac{2^{s(\Lambda)}2^N}{2^{c(\Lambda)-n}}
        =2^{N+s(\Lambda)-c(\Lambda)+n}.
\]
\end{proof}
\subsection{Summing over rational canonical types}

\begin{proof}[Proof of Proposition~\ref{prop:commuting-pairs}]
For $X\in\Sym_n(\F_2)$ of type $\Lambda$, Lemma~\ref{lem:balanced-dimension-lambda} gives
\[
        \#\{Y\in\Sym_n(\F_2):YX=XY\}=2^{s(\Lambda)}.
\]
Therefore
\begin{equation}\label{eq:Cn-sum-types}
        C_n=\sum_\Lambda S(\Lambda)2^{s(\Lambda)},
\end{equation}
where the sum is over rational canonical types of size $n$.

By Lemma~\ref{lem:symmetric-type-bound}, the contribution of one type is at most
\[
        S(\Lambda)2^{s(\Lambda)}
        \leq 2^{N+2s(\Lambda)-c(\Lambda)+n}.
\]
Using $s(\Lambda)=(n+c(\Lambda))/2$, this becomes
\[
        S(\Lambda)2^{s(\Lambda)}\leq 2^{N+2n}.
\]

It remains only to count the number of types.  Let $k_n$ be the number of rational canonical types of size $n$, equivalently the number of similarity classes in $M_n(\F_2)$.  Rational canonical form gives
\[
        \sum_{n\geq0}k_nu^n
        =\prod_{d\geq1}\prod_{j\geq1}(1-u^{dj})^{-I_d},
\]
where $I_d$ is the number of monic Ireducible polynomials of degree $d$ over $\F_2$.  Since $I_d\leq2^d$, this product converges at $u=1/4$.  Hence $k_n\leq C4^n=2^{O(n)}$.  Summing the bound $2^{N+2n}$ over $2^{O(n)}$ types in \eqref{eq:Cn-sum-types} proves
\[
        C_n\leq 2^{N+O(n)}.
\]
\end{proof}

\section{Proof of the main theorem}

By Corollary~\ref{cor:commuting-pairs-reduction} and Proposition~\ref{prop:commuting-pairs},
\[
        \#\{M\in\{\pm1\}^{n\times n}:MM^T=M^TM\}
        \leq 2^n\cdot2^{N+O(n)}=2^{N+O(n)}.
\]
Since the total number of sign matrices is
\[
        2^{n^2}=2^{2N+n},
\]
we get
\[
        \probb(M_nM_n^T=M_n^TM_n)
        \leq \frac{2^{N+O(n)}}{2^{2N+n}}
        =2^{-N+O(n)}.
\]
The lower bound is immediate from symmetric sign matrices: there are $2^{N+n}$ of them, and all are normal.  Thus
\[
        \probb(M_nM_n^T=M_n^TM_n)
        \geq \frac{2^{N+n}}{2^{2N+n}}=2^{-N}.
\]
Combining the two estimates proves Theorem~\ref{thm:main}.

\section{Connection with the $0$-$1$ model}\label{sec:zero-one}

The sign model and the $0$-$1$ model are related by the affine change of variables
\[
        M=2U-J,
\]
where $J$ is the all-ones matrix.  Normality is not invariant under this change of variables, but normal $0$-$1$ matrices are sent to normal sign matrices.  This gives the same sharp exponential order for the $0$-$1$ model.

\begin{corollary}\label{cor:zero-one}
Let $U_n$ be uniformly random in $\{0,1\}^{n\times n}$, and put $N=\binom n2$.  Then
\[
        \probb(U_nU_n^T=U_n^TU_n)=2^{-N+O(n)}.
\]
\end{corollary}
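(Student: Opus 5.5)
The lower bound comes from symmetric $0$-$1$ matrices. Every symmetric $U\in\{0,1\}^{n\times n}$ is normal, and there are $2^{N+n}$ of them among the $2^{n^2}=2^{2N+n}$ matrices in total. Hence $\probb(U_nU_n^T=U_n^TU_n)\geq 2^{-N}$, exactly as in the sign model.

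For the upper bound I will push the problem through the affine map $U\mapsto M=2U-J$. This is a bijection from $\{0,1\}^{n\times n}$ onto $\{\pm1\}^{n\times n}$, so it suffices to show that normal $U$ map into a set of sign matrices of size at most $2^{N+O(n)}$. The proof of Theorem~\ref{thm:main} already gives $\#\{M:MM^T=M^TM\}\leq 2^{N+O(n)}$. The real content is therefore to relate normality of $U$ to normality of $M$.

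I expand
\[
MM^T-M^TM=4(UU^T-U^TU)-2(UJ+JU^T-U^TJ-JU)+(JJ^T-J^TJ).
\]
The last term vanishes. Write $r=U\mathbf 1$ for the row sums and $c=U^T\mathbf 1$ for the column sums. Then $UJ=r\mathbf 1^T$, $JU^T=\mathbf 1 r^T$, $U^TJ=c\mathbf 1^T$, and $JU=\mathbf 1c^T$. If $U$ is normal, then $UU^T=U^TU$, so
\[
MM^T-M^TM=-2\bigl((r-c)\mathbf 1^T+\mathbf 1(r-c)^T\bigr).
\]
Moreover, the diagonal entries give $(UU^T)_{ii}=r_i$ and $(U^TU)_{ii}=c_i$ for a $0$-$1$ matrix, since each equals the number of ones in row $i$ or column $i$ respectively. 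Normality therefore forces $r=c$, and then $M$ is normal. So the map sends normal $0$-$1$ matrices injectively into normal sign matrices, and
\[
\#\{U:UU^T=U^TU\}\leq 2^{N+O(n)}.
\]
Dividing by $2^{2N+n}$ gives $2^{-N+O(n)}$.

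The main obstacle is the cross terms involving $J$. They cancel only because the diagonal of $UU^T=U^TU$ encodes the row and column sums. I will verify this diagonal identity carefully, since without it the map would not preserve normality.
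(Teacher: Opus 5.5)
Your proposal is correct and follows essentially the same route as the paper. You use the symmetric-matrix lower bound, then the bijection $U\mapsto 2U-J$, with the diagonal of $UU^T=U^TU$ forcing row sums to equal column sums, so normal $0$-$1$ matrices map injectively into normal sign matrices and Theorem~\ref{thm:main} transfers the upper bound.
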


\begin{proof}
The lower bound is again supplied by symmetric matrices.  There are $2^{N+n}$ symmetric $0$-$1$ matrices, all of which are normal, so
\[
        \probb(U_nU_n^T=U_n^TU_n)\geq 2^{-N}.
\]
For the upper bound, let $U=(u_{ij})\in\{0,1\}^{n\times n}$ be normal.  Write
\[
        r_i=\sum_{k=1}^n u_{ik},\qquad c_i=\sum_{k=1}^n u_{ki}
\]
for the $i$th row sum and column sum.  Comparing diagonal entries in
\[
        UU^T=U^TU
\]
gives $r_i=c_i$ for every $i$.

Now set $M=2U-J$.  Expanding gives
\begin{align*}
        MM^T-M^TM
        &=4(UU^T-U^TU)  \\
        &\quad -2(UJ+JU^T-U^TJ-JU).
\end{align*}
The first term is zero because $U$ is normal.  The $(i,j)$ entry of the second bracket is
\[
        r_i+r_j-c_i-c_j,
\]
which is zero because $r_i=c_i$ for all $i$.  Hence $M$ is a normal sign matrix.

The map $U\mapsto 2U-J$ is a bijection from $\{0,1\}^{n\times n}$ to $\{\pm1\}^{n\times n}$.  Therefore the number of normal $0$-$1$ matrices is at most the number of normal sign matrices.  By Theorem~\ref{thm:main}, this number is at most $2^{N+O(n)}$.  Dividing by $2^{n^2}=2^{2N+n}$ gives
\[
        \probb(U_nU_n^T=U_n^TU_n)\leq 2^{-N+O(n)}.
\]
Together with the symmetric-matrix lower bound, this proves the corollary.
\end{proof}

\begin{remark}
The converse implication is false: a normal sign matrix need not come from a normal $0$-$1$ matrix under $M=2U-J$.  For instance,
\[
        U=\begin{pmatrix}0&0\\1&0\end{pmatrix}
\]
is not normal, while $2U-J$ is normal.  Thus the transfer above is an inclusion argument rather than an equivalence of the two normality conditions.
\end{remark}
\section*{Acknowledgements}

The author acknowledges the use of AI tools for assistance with literature searches, reference organization, and document editing.  All mathematical statements, proofs, citations, and final editorial decisions are the responsibility of the author.

\end{document}